%% file: main.tex
\documentclass{article}
\input{MetaDonnees}

\newcommand{\ev}{\operatorname{ev}}
\newcommand{\Gal}{\operatorname{Gal}}
\newcommand{\Sym}{\operatorname{Sym}}
\newcommand{\Jmoins}{J[2]\setminus\{ 0 \}}
\newcommand{\moins}{\setminus\{ 0 \}}

\title{A trick to ensure positive Mordell-Weil rank}
\author{Thibaut Misme\thanks{\href{mailto:mismet@tcd.ie}{mismet@tcd.ie}}}
\affil{\scriptsize{Trinity College Dublin}}

\begin{document}

\maketitle

\renewcommand{\contentsname}{Contents}
\tableofcontents                

\newpage

\section*{Abstract}

In this short note, we present a trick to ensure that the Jacobian of a given smooth curve over a number field has strictly positive Mordell-Weil rank. More explicitly, we prove that a smooth curve with no rational non-trivial 2-torsion and no rational theta characteristic has non-zero Mordell-Weil rank assuming the existence of a rational degree 1 divisor class. In particular, it implies that a generic nice curve of genus $g>1$ with a rational degree 1 class has strictly positive rank. This criteria is both of theoretical and computational interest as we show how to use it in practice. We also give refinements, including an equivalent for families of curves, and explicit examples.

\section*{Introduction}

In general it is not easy to ensure that the Jacobian $J$ of a given smooth curve $C$ of genus $g$ over a number field $\K$ has strictly positive Mordell-Weil rank. Doing so usually requires one to exhibit a non-torsion rational point on $J$; a task that may turn out to be tricky in practice.

%\revoir{Detail technique générale.}
First one has to compute an upper bound $B$ of the size $T$ of the $\K$-rational torsion of the Jacobian by computing the cardinal of the $\F_\mathbf{p}$-point $J(\F_\mathbf{p})$ for (several) prime(s) $\mathbf{p}$. Taking the greatest common divisor, one deduces a multiple $B$ of $T$. See \cite{Pooneng2}[section 5.]. It remains to pick a rational point $P\in J(\K)$ and to check whether $B\cdot P$ is trivial. The drawbacks of this strategy is that computing in the Jacobian (see \cite{Pooneng2}[section 4.] or see the Makdisi's algorithm \cite{Makdisi1}, \cite{Makdisi2}) is not that straightforward. In particular, one is more or less bound to work over $\Q$. This leads to the second problem which is that, even when $\K=\Q$, one should consider $P\in J(\Q)$ as a degree 0 linear combination of rational point on $X$. The sample of possible points $P$ to attempt is thus quite limited in practice.\\

The problem of certifying positive rank for elliptic curves or producing such curves is much easier. This is due for instance to the relations that link Selmer groups of quadratic twists and the practicability of 2-descent in genus $g=1$. Similarly, some methods exist for hyperelliptic curves (see \cite{positiveCH}). Such kind of problems are at the heart of the recent proof that Hilbert's 10th problem is undecidable for rings of integers of number fields. It has first been suggested in \cite{Poonen10Hilbert} that the existence of an elliptic curve $E$ over $\Q$ satisfying $\rank_\Q(E)=\rank_\K(E)=1$ for a number field $\K$ would deduce the undecidability of Hilbert 10th problem over the ring of integers $\mathcal{O}_\K$ of $\K$ from the equivalent known classical problem over $\Z$. The proof of very closely statements indeed led to solve Hilbert 10th problem, see \cite{Hilbert10} and \cite{Hilbert10bis}. Note nevertheless that the real difficulty is not to produce elliptic curves of positive rank but to control the exact (positive) rank of elliptic curves over some (quadratic) extensions. Similarly, for a number field $K$, Koymans and Morgan construct in \cite{rank1Jac} some twists of Jacobians of hyperelliptic curves defined over $K$ that are of rank exactly 1 over $K$.\\

In the present article, we present a way to ensure that the Jacobian of a general curve of genus $g>1$ possesses an infinite order rational point without the need of performing computation on any specific point. The technique (see Proposition \ref{main}) does however construct and exhibit an explicit point that is proved to be of infinite order under the required assumptions. It relies on the non-existence of rational 2-torsion points and of rational Theta Characteristics. Asking for a more precise knowledge of the Galois action over the 2-torsion allows us to bypass the condition on Theta Characteristic in Theorem \ref{maincor}.
%We also show (see \ref{maincor}) that a more precise knowledge of the Galois action over the 2-torsion allows to bypass the condition on Theta Characteristics.
Moreover, in both versions, we give implemented and practicable methods for general curves to certify that the hypothesis are satisfied.\\

%Motivation
For instance, the techniques can serve as an easy way to ensure that the rank of $J$ is exactly 1 if we have already bounded the rank from above by 1 by some descent method. This is facilitated by the fact that 2-descent computes relevant information for our techniques.\\
Studying the distribution of Mordell-Weil ranks has been an extensive work in arithmetic statistics. It includes producing infinite families with positive rank for elliptic curves, see for instance \cite{PythagoreCE} or \cite{CEfamily}, and for hyperelliptic curves, see \cite{barbulescu}. We extend our results to families of curves, generating an easy way to produce families of general curves with infinitely many fibers with positive rank. An explicit example is given.\\

Here is the upshot of the techniques that we are about to present (see Theorem \ref{maincor}): assuming the existence of a rational degree 1 divisor class on $C$,
$$g>1 \text{ and }  G_\K \text{ transitive on } J[2]\setminus\{0\} \implies \rank_\K(J) \geq 1. $$
In particular, it applies to a generic nice curve of genus $g>1$ with a rational degree 1 class since the Galois group of its 2-torsion is then isomorphic to the symplectic group $\Sp$. Such a curve is therefore of strictly positive rank.\\

We organised the present article as following. In the first section we outline the main Proposition \ref{main} and the (small) theoretical background on Theta Characteristics it demands. We also expose a local equivalent and a link with Ceresa Cycles. In the second section we decline Proposition \ref{main} to the version that does not imply Theta Characteristics. We then tackle the computational practicability of our results. In section 4, we extend Theorem \ref{maincor} to families of curves. To conclude, we give explicit examples in the final section.  

\newpage
\section{The main result}

We first introduce Theta Characteristics the main Proposition \ref{main} of the present article rely on. We then outline this Proposition, and a somehow local equivalent; namely that locally a nice curve has either rational 2-torsion or a rational Theta Characteristic. Finally, we show a link with Ceresa classes.

\subsection{Theta Characteristics}

References about Theta characteristics include \cite{Mumford1} for a complete and classical survey and \cite{planeTC} focusing on smooth plane quartics.\\

In what follows, let $k$ be a field with $char(k) \neq 2$ and let $C$ be a "nice" curve \footnote{A "nice" curve means a smooth projective and geometrically integral curve in the present article.} on $k$ of genus $g \geqslant 1$. Fix a canonical divisor $K_0$ on $C$ such that $K_0$ is defined over $k$.

\begin{de}[Theta Characteristic] \mbox{}\\
    A Theta Characteristic over k is a divisor $\beta \in \DivOp_C(k)$ such that
    $$ 2 \beta \sim K_0.$$
    We define $\Delta(k) \subset \Pic_C (k) $ to be the set of the \normalfont{classes} \textit{of TC over $k$.\\
    An Odd Theta Characteristic is a Theta Characteristic $\beta$ such that $h^0(\beta):=\dim ~ \Hop^0(\beta)$ is odd. Being odd only depends on the class $[\beta]$ of $\beta \in \Pic_C$, and therefore we set $\OTC(k) \subset \Delta(k)$ to be the} \normalfont{classes} \textit{of OTC over $k$.}
\end{de}

We can observe from the definitions that the difference in $\Pic_C$ two Theta Characteristics gives a $2$-torsion point. This trivial fact will be crucial as it allows us to construct a cocycle which will be in focus in the present article. It also highlights that classes of Theta Characteristics form a finite set of known cardinal.

\begin{prop}
On an algebraic closed field $\kb$,
$$ |\Delta(\kb)| = 2^{2g} \hspace{10mm} and \hspace{10mm} |\OTC(\kb)| = 2^{(g-1)}(2^g -1).$$
\end{prop}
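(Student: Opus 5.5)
The count of $\Delta(\kb)$ is the easy half. The set of classes of theta characteristics is a torsor under $J[2](\kb)=\Pic^0_C[2](\kb)$, so I will show it is non-empty and free-transitive under $J[2]$ and then use $|J[2](\kb)|=2^{2g}$.

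Non-emptiness: over $\kb$ the group $\Pic^0_C(\kb)$ is divisible. Pick a point $P$ and write $[K_0]-(2g-2)[P]=2D$ with $D\in\Pic^0$; then $\beta=(g-1)P+D$ satisfies $2\beta\sim K_0$. Freeness and transitivity are the observation made just before the statement: the difference of two theta characteristics is $2$-torsion, and conversely adding any $\epsilon\in J[2]$ to a theta characteristic gives another. Since $\operatorname{char}k\neq 2$, $J[2](\kb)\cong(\mathbb{Z}/2)^{2g}$, so $|\Delta(\kb)|=2^{2g}$.

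For the odd count I will follow Mumford (\cite{Mumford1}) and the classical approach via quadratic forms. Fix a theta characteristic $\beta$ and define $q_\beta:J[2]\to\mathbb{Z}/2$ by
\[
q_\beta(\epsilon)=h^0(\beta+\epsilon)+h^0(\beta)\bmod 2 .
\]
The key input is Mumford's theorem on the deformation invariance of the parity of $h^0$ for theta characteristics, that is, for line bundles with a nondegenerate quadratic form, equivalently $L\cong\omega\otimes L^{-1}$. From it one derives the Riemann--Mumford relation
\[
q_\beta(\epsilon+\epsilon')=q_\beta(\epsilon)+q_\beta(\epsilon')+e_2(\epsilon,\epsilon'),
\]
where $e_2$ is the Weil pairing. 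So $q_\beta$ is a quadratic form on the $2g$-dimensional symplectic $\mathbb{F}_2$-space $(J[2],e_2)$ whose polar form is the Weil pairing. Hence the function $\epsilon\mapsto h^0(\beta+\epsilon)\bmod 2$ is an affine quadratic function $q_\beta+c$.

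A nondegenerate quadratic form on $\mathbb{F}_2^{2g}$ has $2^{g-1}(2^g+1)$ zeros if its Arf invariant is $0$, and $2^{g-1}(2^g-1)$ zeros otherwise. Both classes occur among translates: $q_\beta(\epsilon+\cdot)+q_\beta(\epsilon)$ ranges over all forms with the given polar form. So the number of odd theta characteristics is either $2^{g-1}(2^g-1)$ or $2^{g-1}(2^g+1)$. To decide which, I will exhibit one curve, for instance a hyperelliptic curve. There the theta characteristics are $\sum_{i\in S}P_i+(\tfrac{g-1}{2}-\tfrac{|S|}{2})\cdot g^1_2$-type combinations of Weierstrass points, and $h^0$ is explicitly computable, giving $\binom{2g+2}{g-1}+\binom{2g+2}{g-5}+\dots$ odd ones, which equals $2^{g-1}(2^g-1)$. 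Alternatively, deformation invariance of parity makes the count constant in families, since the moduli space $\mathcal{M}_g$ is connected, so one model curve suffices.

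The main obstacle is Mumford's parity theorem, i.e.\ the proof that $\epsilon\mapsto h^0(\beta+\epsilon)\bmod 2$ is quadratic with polar form $e_2$. This is deep, and I would cite \cite{Mumford1} for it rather than reprove it. The rest is linear algebra over $\mathbb{F}_2$ and a single explicit check.
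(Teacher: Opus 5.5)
Your count of $\Delta(\kb)$ is fine. It is the torsor remark the paper makes just before the statement, with non-emptiness supplied by divisibility of $\Pic^0_C(\kb)$. For the odd count the paper only cites Mumford, and your ingredients are the classical ones behind that citation. However, the way you assemble them leaves a logical hole.

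The Riemann--Mumford relation tells you, for each curve separately, that the number of odd theta characteristics is either $2^{g-1}(2^g-1)$ or $2^{g-1}(2^g+1)$. Nothing in it says the same alternative occurs on every curve. So your hyperelliptic computation, which is correct ($\sum_{j\ge 0}\binom{2g+2}{g-1-4j}=2^{g-1}(2^g-1)$), settles the question only for hyperelliptic curves.

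The sentence you offer as an ``alternative'' is in fact the step that carries the proof, and it should be made precise:
\begin{enumerate}[(i)]
\item In a smooth proper family over a connected base, the theta characteristics form a finite \'etale cover; this is where $\operatorname{char}k\neq 2$ is used.
\item Mumford's parity invariance makes the odd ones an open and closed subscheme of that cover, so their number is locally constant on the base.
\item Irreducibility of $\mathcal{M}_g$ (Deligne--Mumford, valid in any characteristic) connects $C$ to a hyperelliptic curve.
\end{enumerate}
This step is indispensable, not optional. Once it is in place, the quadratic-form dichotomy is not needed for the count at all.

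If you want to keep the quadratic-form route and avoid moduli, the missing input is that $\operatorname{Arf}(q_\beta)\equiv h^0(\beta)\pmod 2$, or equivalently $\sum_{\epsilon\in J[2]}(-1)^{h^0(\beta+\epsilon)}=2^g$. Your observation that every form with polar $e_2$ occurs as some $q_{\beta'}$ does not supply this. Combined with $q_{\beta+\epsilon}=q_\beta+e_2(\epsilon,\cdot)$ and $\operatorname{Arf}(q_\beta+e_2(\epsilon,\cdot))=\operatorname{Arf}(q_\beta)+q_\beta(\epsilon)$, it only shows that $\operatorname{Arf}(q_{\beta'})+h^0(\beta')$ is independent of $\beta'$. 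It does not tell you which constant it is, and that constant is exactly what separates $2^{g-1}(2^g-1)$ from $2^{g-1}(2^g+1)$.

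A minor point of attribution: as I recall Mumford's argument, the quadratic relation comes from parity invariance for rank-$2$ $\omega$-valued orthogonal bundles (push-forwards from the double cover attached to $\epsilon$), not from the line-bundle case alone. Since you cite the result rather than derive it, this does not affect your argument.
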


\begin{proof}
    See \cite{Mumford1}
\end{proof}

Since the Galois action on $\Pic_C$ preserves the sets $\Delta$ and $\OTC$, they actually are both finite non-empty Galois sets.

\subsection{Main proposition}

Everything relies on the following fact.

\begin{prop}\label{main} \mbox{}\\
	Let $C$ be a nice (smooth projective and geometrically integral) curve defined over a number field $\K$, and let $J$ be its Jacobian.\\ 
	Suppose $C$ has a $\K$-rational divisor class $P_0$ of degree $1$ (e.g. a rational point).\\
	Then, \emph{at least one} of the following statements is true:
    \begin{enumerate}[i.]
        \item The Mordell-Weil rank $\rank_\K(J)$ of the Jacobian $J$ of $C$ is strictly positive.
        \item The rational 2-torsion $J[2](\K)$ is non-trivial.
        \item There exists a $\K$-rational theta characteristic on the curve.
    \end{enumerate}
\end{prop}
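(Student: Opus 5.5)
The plan is to prove the contrapositive: assume that (i) and (ii) both fail, and construct a $\K$-rational theta characteristic. The construction is explicit. From the canonical divisor $K_0$, which is defined over $\K$, and the rational degree $1$ class $P_0$, form
$$D := [K_0] - (2g-2)P_0 \in \Pic^0_C(\K) \subset J(\K).$$
This is a $\K$-rational point of the Jacobian, because it is a $\Z$-linear combination of Galois-invariant classes and has degree $(2g-2)-(2g-2)=0$. The goal is to halve $D$ inside $J(\K)$.

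\emph{Step 1 (the group structure of $J(\K)$).} By the Mordell--Weil theorem, $J(\K)$ is finitely generated. If $\rank_\K(J)=0$, then $J(\K)=J(\K)_{\mathrm{tors}}$ is a finite abelian group. If moreover $J[2](\K)=\{0\}$, this finite group has no element of order $2$, so by Cauchy's theorem its order is odd. Multiplication by $2$ is then injective on $J(\K)$, hence bijective since the group is finite. In particular there is a unique $E \in J(\K)$ with $2E = D$. Explicitly, $E = \tfrac{N+1}{2} D$ where $N=|J(\K)|$.

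\emph{Step 2 (building the theta characteristic).} Set $\beta := (g-1)P_0 + E \in \Pic_C(\K)$. This class is $\K$-rational, being a sum of rational classes. Its double satisfies
$$2\beta = (2g-2)P_0 + D = [K_0],$$
so $\beta$ is the class of a theta characteristic and lies in $\Delta(\K)$. This gives (iii).

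\emph{Main obstacle.} The only delicate point is the distinction between a $\K$-rational divisor class and the class of a $\K$-rational divisor: the definition of $\Delta(k)$ speaks of classes of divisors $\beta\in\DivOp_C(k)$. If $P_0$ is the class of a rational point, or more generally of a rational divisor, then $E$ is the class of a rational divisor as well. This is because $\Pic_C(\K)$ coincides with $\DivOp_C(\K)/\sim$ whenever $C$ has a rational divisor of degree $1$; the obstruction lies in $\mathrm{Br}(\K)$ and is killed in that case. So in that case $\beta$ is represented by a rational divisor. In the general case, I will read (iii) as asserting the existence of a Galois-invariant class in $\Delta$, i.e.\ an element of $\Delta(\kb)^{G_\K}$, which is exactly what the argument produces. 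I expect the argument to go through without further difficulty once this convention is fixed. If a genuine divisor is required, I would try to use the fact that for number fields the Brauer obstruction for a rational class is controlled by that of $P_0$, and replace $P_0$ by a rational degree $1$ divisor when one exists.
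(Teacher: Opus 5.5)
Your argument is correct and is essentially the paper's first proof: the paper also forms $D=K_0-2(g-1)P_0\in J(\K)$, notes that $D\in 2J(\K)$ exactly when a rational theta characteristic exists, and uses that when $J[2](\K)=0$ a nonzero element of $J(\K)/2J(\K)$ forces positive rank. Your odd-order halving $E=\tfrac{N+1}{2}D$ is the contrapositive form of that same argument. Your reading of (iii) as a Galois-invariant class in $\Delta$ is also the convention the paper's proof actually uses.
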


We give two proofs of this result.

\begin{proof}[First proof]
Let $K_0$ be the canonical class. It is a $\K$-rational class of degree $2g-2$, where $g$ is the genus of $C$. The degree $g-1$ class $D_0:=(g-1)P_0$ is $\K$-rational given the assumptions. Then $D:=K_0-2D_0$ is a $\K$-rational class of degree 0, so it corresponds to a point of $J(\K)$ under the association $J \simeq_\K \Pic_C^0$ as allowed by the existence of $P_0$ (see \cite{PoonenShaefer}[Proposition 3.2]). The class $D$ is trivial in $J(\K)/2J(\K)$ if and only if there exists a $\K$-rational class $k_0$ such that $2k_0=K_0$ in $J$; this is exactly the definition of a rational theta characteristic. Now, let us assume that there is no rational theta characteristic (i.e. that statement $iii.$ is not satisfied), then $D$ defines a non-trivial element of $J(\K)/2J(\K) $. Further assume that $J[2](\K) = 0$ is trivial (i.e. $ii.$ is not satisfied either), then it does contribute to $J(\K)/2J(\K)$, so the existence of $0 \neq D \in J(\K)/2J(\K)$ shows that $\rank_\K(J) > 0$.
\end{proof}

\begin{proof}[Second proof]
We can also similarly prove this fact by constructing directly a cocyle attached to $D$. Let $\beta$ be any (class of) theta characteristic. Let 
\begin{align*}
    \zeta : G_\K & \ra J[2]\\
    \sigma & \mapsto  \beta^\sigma -\beta
\end{align*}
We observe that $\zeta$ is well-defined, as the difference of two theta characteristics is always a 2-torsion point; and moreover that $\zeta \in \Hop^1(\K,J[2])$. The cocycle $\zeta$ is trivial if and only if there exists a 2-torsion point $P \in J[2]$ such that $\zeta(\sigma) = P^\sigma -P$ for all Galois automorphisms $\sigma$, i.e. if and only if $(\beta + P)^\sigma = (\beta +P) $ for all $\sigma$. Since the set $\beta + J[2]$ is exactly the set of theta characteristics, we have that $\zeta$ is a coboundary if and only if there exists a $\K$-rational theta characteristic. Now, let us assume there is no rational theta characteristic (or that the statement $iii.$ is not satisfied), then $\zeta$ is a non-trivial cocycle of $\Hop^1(\K,J[2])$. In fact, it belongs to the 2-Selmer group of $J$. Indeed, $\zeta$ is trivial in $\Hop^1(\K,J)$ as is more apparent in the writing $\zeta(\sigma) = (\beta -D_0)^\sigma - (\beta -D_0)$ where $D_0:=(g-1)P_0$ is a $\K$-rational degree $g-1$ divisor class. Yet, from the cohomology of the exact sequence of Galois modules $0 \ra J[2] \ra J \overset{2}{\ra} J \ra 0$, the kernel of the natural map $\Hop^1(\K,J[2]) \ra \Hop^1(\K,J)$ is $J(\K)/2J(\K)$ and this latter naturally injects into the Selmer group. So, in conclusion, $\zeta$ is a non-trivial element of $J(\K)/2J(\K) \subset \Sel_\K(J) \subset \Hop^1(\K,J[2])$. Further assume that $J[2](\K) = 0$ is trivial (i.e. $ii.$ is not satisfied either), then it does contribute to $J(\K)/2J(\K)$, so the existence of $0 \neq \zeta \in J(\K)/2J(\K)$ shows that $\rank_\K(J) > 0$.
\end{proof}

\begin{rem}
    Notice that this result is not interesting for elliptic curves. Indeed, the canonical divisor of elliptic curves is trivial and Theta Characteristics therefore confuse with 2-torsion. Consequently, there is always a rational Theta Characteristic (the zero point).\\
\end{rem}

The point of this result is that in general, it is not so easy to ensure that the Jacobian $J$ of a curve $C$ has a strictly positive rank without exhibiting an explicit rational point on the Jacobian and proving its order is infinite by direct computations. It can be hard to undertake in practice. So if we can check that there are neither rational non-trivial 2-torsion nor rational theta characteristics (but still a rational degree 1 class) on a curve, then we are ensured that the Mordell-Weil rank of the Jacobian $J$ is at least 1. In that case, note that the first proof is constructive as it provides that $K_0-2D_0$ is indeed of infinite order for any rational divisor $D_0$ of degree $g-1$. However, we do not need to exhibit any explicit $D_0$ to ensure positive rank.\\
Unfortunately, even if we only require the existence of a degree $g-1$ rational divisor $D_0$ to construct such an infinite order point on the Jacobian, we are still bound to the existence of a rational degree 1 divisor. It certifies that the isomorphism $J \simeq \Pic^0_C$ behaves correctly under Galois (see \cite{PoonenShaefer}[Proposition 3.2]). This discussion leads to the following remark.\\

\begin{rem}\label{remHypothesis}
    We can replace in \ref{main} and \ref{maincor} the assumption of the existence of a rational degree 1 class by the assumption that the curve is everywhere locally solvable (ELS) and has a rational degree $g-1$ class. Indeed, under these hypotheses the isomorphism $J \simeq \Pic^0_C$ is Galois equivariant (see \cite{PoonenShaefer}[Proposition 3.3]) and we can still construct $D_0$. The proofs therefore work the same.\\
    Actually, we can even relax the hypothesis by replacing the ELS assumption by the assumption that the local period of the curve is 1 everywhere (see \cite{PoonenShaefer}[Proposition 3.3]).\\
\end{rem}

\underline{Example}:\\
Consider a genus 2 curve as the smooth projective normalisation of the affine curve $y^2=f(x)$ with $f$ a squarefree monic polynomial of degree 6 defined over $\Q$. It has two rational points $\infty^+$ and $\infty^-$ at infinity. Since $\infty^+ + \infty^-$ is a canonical divisor, by taking $e:=\infty^-$, Proposition \ref{main} checks for instance whether $[\infty^+ - \infty^-]\in J(\Q)$ is torsion.\\

Furthermore, the proof brings to light the following cocycle, which may be interesting in its own right.
\begin{cor}
    Let $C$ be a nice curve defined over a number field $\K$ that possesses a rational degree 1 class, let $J$ be its Jacobian, and pick a theta characteristic $\beta$.\\
    Then the map $\xi_{TC} : G_\K \ni \sigma \mapsto \beta^\sigma -\beta \in J[2]$ defines an element of $\Hop^1(\K,J[2])$, which does not depend on the choice of $\beta$. This element is trivial iff $C$ admits a $\K$-rational theta characteristic. Moreover, $\xi_{TC} \in \Sel(J)$ belongs to the $2$-Selmer group of $J$ and its image in $\Sha^1(\K,J)[2]$ is trivial.
\end{cor}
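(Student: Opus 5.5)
The plan is to reuse the second proof of Proposition \ref{main}, adding the independence and the Sha statement. First I will check that $\xi_{TC}$ is a well-defined $1$-cocycle $G_\K \ra J[2]$. Since $K_0$ is $\K$-rational and Galois preserves linear equivalence, $2\beta^\sigma \sim K_0^\sigma = K_0 \sim 2\beta$. Hence $\beta^\sigma-\beta$ is a class of degree $0$ killed by $2$, so it lies in $J[2]$ via the Galois-equivariant identification $J\simeq \Pic^0_C$ given by the rational degree $1$ class (\cite{PoonenShaefer}[Proposition 3.2]). The cocycle identity is formal: $\beta^{\sigma\tau}-\beta = (\beta^\tau-\beta)^\sigma + (\beta^\sigma-\beta)$. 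Continuity holds because $\beta$ is defined over a finite extension, so the map factors through a finite quotient.

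For independence of $\beta$, note that any other theta characteristic is $\beta' = \beta + P$ with $P\in J[2]$, since $\Delta(\kb)=\beta+J[2]$ (both have cardinality $2^{2g}$ and differences lie in $J[2]$). Then $\xi'(\sigma)=\xi(\sigma)+P^\sigma-P$, so the two cocycles differ by a coboundary and define the same class. Triviality iff a rational theta characteristic exists is exactly the coboundary computation of the second proof: $\xi(\sigma)=P^\sigma-P$ for all $\sigma$ iff $(\beta-P)^\sigma=\beta-P$ for all $\sigma$. One should also remark that a Galois-fixed class of theta characteristics is a rational class; with a rational degree $1$ class, rational classes in $\Pic_C$ are represented by rational divisors, which matches the definition of $\Delta(\K)$.

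For the Selmer and Sha statements, I will use the rational class $D_0=(g-1)P_0$ and write $\xi(\sigma)=(\beta-D_0)^\sigma-(\beta-D_0)$ with $Q:=\beta-D_0\in J(\kb)$. Then $2Q = K_0-2D_0 =: D\in J(\K)$, and this formula is precisely the Kummer coboundary $\delta(D)$ of $D$ in the sequence $0\ra J[2]\ra J\overset{2}{\ra}J\ra 0$. So $\xi_{TC}\in \delta(J(\K)/2J(\K))$. Its image in $\Hop^1(\K,J)$ is visibly the coboundary of $Q$, hence trivial, and so is its image in $\Sha^1(\K,J)[2]$. Moreover, the Kummer image lies in the Selmer group, because locally the same formula exhibits $\xi_{TC}$ as $\delta_v(D)$ for $D\in J(\K_v)$.

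I expect the only real subtlety to be the bookkeeping of rationality: making sure the identification $\Pic^0_C\simeq J$ and the notion of rational class behave correctly, which is exactly where the rational degree $1$ class is used. The rest is formal.
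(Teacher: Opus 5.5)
Your proposal is correct and is essentially the paper's own argument: the corollary is read off from the second proof of Proposition \ref{main}. There the formula $\xi_{TC}(\sigma)=(\beta-D_0)^\sigma-(\beta-D_0)$ identifies $\xi_{TC}$ as the Kummer image of $K_0-2D_0$, so it lies in the Selmer group and is trivial in $\Hop^1(\K,J)$, hence has trivial image in $\Sha^1(\K,J)[2]$. You also make explicit two points the paper leaves implicit: independence of $\beta$ (changing $\beta$ changes the cocycle by a coboundary $P^\sigma-P$), and the fact that a Galois-fixed theta characteristic class is represented by a $\K$-rational divisor, which holds because the degree $1$ class is that of a rational divisor (e.g.\ a rational point), as the paper intends.
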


\begin{rem}
    This cocycle is the class of the torsor under A[2] parametrizing the Theta Characteristics. See \cite{PoonenXiTC} for another approach.
\end{rem}

\subsection{A local version}
We also prove a local equivalent to the main proposition \ref{main}.

\begin{prop}
    Let $k$ be a number field and $k_v$ its completion at a place $v \nmid 2$. Let $C$ be a nice curve defined over $k_v$ with a rational degree 1 class and $J$ its Jacobian.\\
    Then, \emph{at least one} of the following statements is true:
    \begin{enumerate}[i.]
        \item The rational 2-torsion $J[2](k_v)$ is non-trivial.
        \item There exists a $k_v$-rational theta characteristic on the curve.
    \end{enumerate}
\end{prop}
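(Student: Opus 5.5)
The plan is to run the first proof of Proposition \ref{main} over $k_v$ and to replace the global rank argument by the structure of the compact group $J(k_v)$. As there, put $D_0:=(g-1)P_0$, a $k_v$-rational class of degree $g-1$, and $D:=K_0-2D_0$, a $k_v$-rational class of degree $0$. Because a rational degree $1$ class exists, $J\simeq \Pic^0_C$ compatibly with Galois (\cite{PoonenShaefer}[Proposition 3.2]), so $D\in J(k_v)$. Exactly as before, $D\in 2J(k_v)$ if and only if there is a $k_v$-rational class $k_0$ with $2k_0=K_0$: write $D=2E$ and take $k_0=D_0+E$. So it suffices to show that $J[2](k_v)=0$ forces $J(k_v)/2J(k_v)=0$.

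The key step is the lemma $|J(k_v)/2J(k_v)|=|J(k_v)[2]|$ for $v\nmid 2$ non-archimedean. I will use Mattuck's theorem: $J(k_v)$ is a compact $p$-adic Lie group of dimension $g$, where $p$ is the residue characteristic. Hence it contains a finite-index open subgroup isomorphic to $\mathcal{O}_v^g$, and $J(k_v)\simeq \Z_p^{dg}\times T$ with $T$ finite and $d=[k_v:\Q_p]$. Since $p\neq 2$, multiplication by $2$ is bijective on $\Z_p^{dg}$. Therefore
$$J(k_v)/2J(k_v)\simeq T/2T \quad\text{and}\quad J(k_v)[2]=T[2].$$
For a finite abelian group, $|T/2T|=|T[2]|$. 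Consequently $J[2](k_v)=0$ gives $J(k_v)=2J(k_v)$, so $D=2E$ with $E$ rational and $D_0+E$ is a rational theta characteristic. In cohomological terms, the cocycle $\zeta$ of the second proof lies in the image of $J(k_v)/2J(k_v)$, which is trivial.

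The hypothesis $v\nmid 2$ is also meant to cover archimedean places, which I will treat directly. If $v$ is complex, then all of $J[2]$ is rational. If $v$ is real, then $J(\mathbb{R})^0\simeq(\mathbb{R}/\Z)^g$ already contains $(\Z/2)^g\neq 0$ rational $2$-torsion points, since $g\geq 1$. In both cases statement i.\ holds.

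The main obstacle is the structural input on $J(k_v)$, namely invoking Mattuck/Lutz correctly; it is exactly where $v\nmid 2$ is used. A secondary point is that the definition of a theta characteristic asks for a divisor defined over $k_v$ rather than just a rational class. This is where the rational degree $1$ class matters again. The obstruction to a rational class in $\Pic_C(k_v)$ coming from a divisor lies in $\mathrm{Br}(k_v)$, and it vanishes when the period of $C$ is $1$ (Lichtenbaum). So I will either cite this or, like the paper, work with classes throughout.
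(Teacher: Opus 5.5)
Your proof is correct and is essentially the paper's argument. The paper also treats the obstruction as a class in $J(k_v)/2J(k_v)$, namely $\xi_{TC}$, which is the Kummer image of your $D=K_0-2D_0$. It then simply cites Poonen--Schaefer, Proposition 12.10, for $\dim_{\F_2}J(k_v)/2J(k_v)=\dim_{\F_2}J[2](k_v)$ when $v\nmid 2$, which you instead derive from Mattuck's theorem. Your separate treatment of archimedean places, where that equality fails but i.\ holds trivially, is a small addition the paper omits.

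One correction to your closing remark: period $1$ does not by itself kill the Brauer obstruction. Over a $p$-adic field Lichtenbaum only gives $I\in\{P,2P\}$, and for even $g$ one can have $P=1$, $I=2$ (for example genus-$2$ curves deficient at $v$ in the sense of Poonen--Stoll, as I recall from their work). In that case a rational theta-characteristic class of odd degree $g-1$ comes from no $k_v$-rational divisor, so you should work with classes throughout, as the paper does.
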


\begin{proof}
    %Since $C$ is a nice curve on a local field, it possesses a rational degree 1 divisor class. 
    Assume there is no rational theta characteristic (statement $ii.$ is false). According to the last Corollary, the cocycle $\xi_{TC}$ defines a non-trivial element of $J(k_v)/2J(k_v)$. From \cite{PoonenShaefer}[Proposition 12.10], this latter is a $\F_2$-vector space of dimension $\dim_{\F_2}(J[2](k_v))$ because $v\nmid 2$. Therefore the rational 2-torsion is non-trivial (statement $i.$ is true).
\end{proof}

Note that the last proposition is interesting only over $k_p$ for a prime $p \nmid 2$ of bad reduction as there always is a rational Theta Characteristic for a prime of good reduction (see \cite{ratTC}[section 5. remark 2.]).

\subsection{ A link with Ceresa classes}

When a "nice" curve $C$ over $\Q$ of genus $g>1$ has a Ceresa cycle $\kappa_e$ associated to a rational degree 1 class $e$ such that $\kappa_e$ is torsion, so is the class $D:=K_C-(2g-2)e$ (see \cite{Ceresa} lemma 2.10). It implies that $\xi_{TC}$ is torsion as well since it is the image of $D$ under the Kummer map $J(\K)/2J(\K) \hra \Hop^1(\K,J[2])$ (see the first proof of the Proposition $\ref{main}$). This is interesting if the order of torsion of $\kappa_e$ is odd as it induces the triviality of $\xi_{TC}$. The author therefore wonders if there is a more direct link between the Galois cocycle associated with the Ceresa class $\kappa_e$ and $\xi_{TC}$.\\

We may also note the following related Corollary.
\begin{cor}\label{Ceresa}
    Let $C$ be a nice curve over $\Q$ of genus $g>1$, and let $J$ be its Jacobian. Assume $C$ possesses a degree 1 rational class. Assume as well that $C$ admits no rational theta characteristic and no rational non-trivial 2-torsion point. Then, the Jacobian $J$ has a strictly positive Mordell-Weil rank and $C$ has no torsion Ceresa cycle.
\end{cor}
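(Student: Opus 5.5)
The first assertion, that $\rank_\Q(J)>0$, is Proposition \ref{main} with $\K=\Q$. Its hypotheses are a rational degree 1 class, no rational theta characteristic, and $J[2](\Q)=0$, so statements $ii.$ and $iii.$ fail and statement $i.$ must hold. I will also record the explicit point from the first proof: fix the rational degree 1 class $e$ and set $D_0:=(g-1)e$. Then
$$D:=K_C-2D_0=K_C-(2g-2)e\in J(\Q)$$
is nonzero in $J(\Q)/2J(\Q)$, and since $J[2](\Q)=0$ it has infinite order.

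For the Ceresa assertion I argue by contradiction. Suppose the Ceresa cycle $\kappa_e$ attached to some rational degree 1 class $e$ is torsion. By \cite{Ceresa} lemma 2.10, as quoted in the discussion just above, the class $D=K_C-(2g-2)e$ is then torsion in $J(\Q)$. But $J(\Q)_{\mathrm{tors}}$ has no element of order $2$, so it has odd order. Every torsion point is therefore divisible by $2$ inside the torsion subgroup, and $D\in 2J(\Q)$.

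This says $D$ is trivial in $J(\Q)/2J(\Q)$. Equivalently, its Kummer image $\xi_{TC}$ is trivial, which by the Corollary on $\xi_{TC}$ means $C$ has a rational theta characteristic. Directly: if $D=2x$ with $x\in J(\Q)$, then $x+D_0$ is a rational class with $2(x+D_0)=K_C$. This contradicts the hypothesis. So $D$ has infinite order and no $\kappa_e$ can be torsion.

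The main step needing care is the scope of ``no torsion Ceresa cycle''. The Ceresa cycle depends on the base class $e$, and the lemma from \cite{Ceresa} is stated for a rational degree 1 class, so the argument has to run for every rational degree 1 class $e$, not just a fixed one. That works: the argument above used nothing about $e$ beyond its rationality and degree, since $D_0=(g-1)e$ is rational of degree $g-1$ for any such $e$. I also need to check that the construction of the point $D$ is compatible with the normalisation in \cite{Ceresa}, where the relevant class may appear as $K_C-(2g-2)e$ or as a multiple of it. If it is a multiple $mD$, then $mD$ torsion still forces $D$ torsion, so the argument is unaffected.
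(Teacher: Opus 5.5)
Your proof is correct and follows the paper's argument. The paper likewise shows that a torsion $\kappa_e$ forces $D=K_C-(2g-2)e$ to be torsion, rules out even order using the absence of rational $2$-torsion, and for odd order $m=2k+1$ exhibits the rational theta characteristic $(g-1)e-kD$; this is exactly your $x+D_0$ with $x=(k+1)D$, since the two differ by $mD=0$. Your handling of the even case is slightly cleaner than the paper's, which loosely calls $D$ itself the nonzero $2$-torsion point rather than $(m/2)D$.
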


\begin{proof}
    The first statement derives from the Proposition \ref{main}.\\
    For the second, we show the contrapositive. According to \cite{Ceresa}, the existence of a torsion Ceresa cycle $\kappa_e$ associated to a degree 1 rational class $e$ implies that the rational class $D:=K_C-(2g-2)e$, where $K_C$ is the canonical class, is torsion too. If its order of torsion is even, then $C$ has a nonzero rational 2-torsion point $D\in J[2](\Q )$. If its order of torsion $m=2k+1$ is odd, then $(g-1)e-kD$ is a rational theta characteristic on $C$.
\end{proof}

\section{Bypassing Theta Characteristics}

Suppose now that $C$ has genus $g > 1$. Here is a refinement of Proposition \ref{main} that only requires us to pay attention to $J[2]$, so we no longer have to keep an eye on theta characteristics. 

\begin{theo}\label{maincor}
Let $C$ be a nice curve over a number field $\K$ of genus $g>1$ with a rational class of degree $1$ and let $J$ be its Jacobian.\\
If Galois acts transitively over $J[2] \setminus \{ 0 \}$, then
$\rank_\K(J) \geq 1$.
\end{theo}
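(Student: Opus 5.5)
The plan is to reduce Theorem \ref{maincor} to Proposition \ref{main}. I will show that transitivity of $G_\K$ on $J[2]\setminus\{0\}$ rules out both alternatives ii.\ and iii.\ of that proposition. Since $C$ has a rational degree $1$ class by assumption, alternative i.\ then gives $\rank_\K(J)\geq 1$.

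Alternative ii.\ is easy to exclude. The set $J[2]\setminus\{0\}$ has $2^{2g}-1\geq 15>1$ elements. If some nonzero $P\in J[2]$ were $\K$-rational, its orbit would be $\{P\}$, which is not all of $J[2]\setminus\{0\}$. This contradicts transitivity, so $J[2](\K)=0$.

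The main step is excluding a rational theta characteristic, and here I use the parity structure. Suppose $\beta$ is a $\K$-rational theta characteristic class.
\begin{enumerate}
\item Since $\Delta(\kb)=\beta+J[2]$, the map $J[2]\to\Delta(\kb)$, $P\mapsto \beta+P$, is a bijection. Because $\beta^\sigma=\beta$, it is Galois-equivariant.
\item The Galois action on $\Pic_C$ preserves $\OTC(\kb)$. Hence $S:=\{P\in J[2] : \beta+P\in \OTC(\kb)\}$ is a $G_\K$-stable subset of $J[2]$.
\item Under the transitivity hypothesis, the $G_\K$-orbits on $J[2]$ are exactly $\{0\}$ and $J[2]\setminus\{0\}$. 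So $S$ is a union of these orbits, and $|S|\in\{0,\,1,\,2^{2g}-1,\,2^{2g}\}$.
\item By the earlier proposition counting theta characteristics, $|S|=|\OTC(\kb)|=2^{g-1}(2^g-1)$.
\end{enumerate}

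For $g>1$ this count is none of the allowed values:
\begin{itemize}
\item it is at least $2\cdot 3=6$, so it is neither $0$ nor $1$;
\item it satisfies $2^{g-1}(2^g-1)=2^{2g-1}-2^{g-1}<2^{2g}-1$, so it is neither $2^{2g}-1$ nor $2^{2g}$.
\end{itemize}
This contradiction shows that no rational theta characteristic exists, so alternative iii.\ fails.

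This step is exactly where $g>1$ is needed. For $g=1$ the number of odd theta characteristics is $1$, and $S=\{0\}$ is allowed; this matches the earlier remark on elliptic curves. With both ii.\ and iii.\ excluded, Proposition \ref{main} yields $\rank_\K(J)\geq 1$.
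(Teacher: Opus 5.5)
Your proposal is correct and follows essentially the same route as the paper. Both reduce to Proposition \ref{main}, exclude rational $2$-torsion directly from transitivity, and exclude a rational theta characteristic via the Galois-equivariant bijection $J[2]\to\Delta$, $P\mapsto\beta+P$, together with the Galois-stable odd/even splitting of $\Delta$. Your check that $2^{g-1}(2^g-1)$ is not a union of orbit sizes is just a more explicit form of the paper's remark that, for $g>1$, removing the rational $\beta_0$ leaves both parity classes non-empty.
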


\begin{proof}
    In order to use Proposition \ref{main}, let us show that under this hypothesis the curve $C$ cannot admit a rational theta characteristic. The existence of a $\K$-rational theta characteristic $\beta_0$ would induce an isomorphism of Galois sets 
    \begin{align*}
        \Delta & \simeq J[2]\\
        \beta & \mapsto  \beta -\beta_0.
    \end{align*}
    But the Galois action over $\Delta$ cannot be transitive, as $\Delta$ naturally splits between even and odd theta characteristics. So in the case where the \'etale algebra of $J[2] \setminus \{0 \}$ is actually a field (i.e. when the Galois action on $J[2] \setminus \{ 0 \}$ is transitive), then obviously there is no rational 2-torsion point in $J$. There is strictly more than one Theta Characteristic of both types if and only if the genus is greater than 1. In this situation, there is no rational theta characteristic either, otherwise there would be at least two distinct Galois orbits (one for the odd theta characteristics, and one for the even ones). This concludes the proof.
\end{proof}

\begin{rem}
    We can slightly soften the hypothesis. We refer to the Remark \ref{remHypothesis}.\\
\end{rem}

If Galois representations afforded by the 2-torsion of Jacobians are less studied than its elliptic curve equivalent, we still expect the image Galois group to be $\Sp$ in the generic case and therefore to act transitively over $J[2]\setminus\{0\}$. Thus, according to the last theorem, it is reasonable to think that "most" of the nice curves of genus $g>1$ that possesses a rational class of degree 1 have positive rank. This would be in stark contrast with the case $g=1$, as a famous conjecture states that among the curves of genus $1$ defined over $\Q$ and admitting a rational point (i.e. an elliptic curve), $50\%$ have rank $r=1$ and $50\%$ have rank $r=0$.\\

A smooth plane quintic naturally possesses a rational divisor of degree $5=g-1$. For the reasons evoked in the Remark \ref{remHypothesis}, a generic smooth plane quintic (with therefore transitive Galois action on the 2-torsion) which is also ELS would have positive rank.

\section{Computational Approach}

Checking whether a rational 2-torsion point and/or a rational theta characteristic exist then arises as a natural task. Denote by $\Delta$ the set of (classes of) theta characteristics. In practice and in the case where we know a rational point on the curve, we can use algorithms to compute the Galois representations attached to $J[2]$ and to the $\F_2$-permutation module $\F_2^\Delta$ of the Theta Characteristics (implemented by Mascot for the former \cite{Mascot}, and by the author for the latter (to appear soon)). These algorithms compute the \'etale algebras of $J[2] \setminus \{0\}$ and $\Delta$ as quotient algebras $\Q[y]/\chi(y)$ for some squarefree $\chi(y) \in \Q[y]$. For either of them, the underlying Galois set has a rational element if and only if the associated $\chi$ has a linear factor.\\

In a more computational point of view, the Theorem \ref{maincor} can be re-expressed in the following way.

\begin{cor} \label{CorIrreducible}
Let $C$ be a nice curve over $\K$ with a rational class of degree $1$ and let $J$ be its Jacobian. Let $\chi$ be a polynomial such that the \'etale algebra of the Galois set $J[2]\setminus 0$ is isomorphic to $\K[y]/\chi(y)$.\\
If $\chi$ is irreducible, then
$\rank_\K(J) \geq 1$.
\end{cor}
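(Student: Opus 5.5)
The plan is to reduce Corollary \ref{CorIrreducible} directly to Theorem \ref{maincor}. All the work lies in translating the irreducibility of $\chi$ into transitivity of the Galois action on $J[2]\setminus\{0\}$. I would use the standard dictionary (Grothendieck's Galois theory for étale algebras) between finite étale $\K$-algebras and finite continuous $G_\K$-sets. A finite $G_\K$-set $X$ corresponds to the algebra $A_X$ of Galois-equivariant maps $X\to\Kb$. Under this correspondence, the decomposition of $A_X$ into a product of fields matches the decomposition of $X$ into $G_\K$-orbits: each orbit $G_\K\cdot x$ contributes the field factor $\K(x)$, of degree equal to the orbit size. Hence $A_X$ is a field exactly when $X$ is a single orbit.

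Concretely, take $A=\K[y]/\chi(y)$ with $\chi$ squarefree, as guaranteed by the hypothesis that it is an étale algebra. By the Chinese remainder theorem,
$$\K[y]/\chi(y)\simeq\prod_i \K[y]/\chi_i(y),$$
where the $\chi_i$ are the distinct irreducible factors of $\chi$. The associated $G_\K$-set is the set of roots of $\chi$ in $\Kb$, and its orbits are exactly the root sets of the $\chi_i$. The isomorphism of étale algebras with that of $J[2]\setminus\{0\}$ yields, through the anti-equivalence, a $G_\K$-equivariant bijection between $J[2]\setminus\{0\}$ and the roots of $\chi$. So if $\chi$ is irreducible, the roots form a single orbit, and therefore $G_\K$ acts transitively on $J[2]\setminus\{0\}$.

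It remains to apply Theorem \ref{maincor}, which requires $g>1$. This hypothesis is not stated in Corollary \ref{CorIrreducible}, so the case $g=1$ must be dealt with. When $g=1$, the set $J[2]\setminus\{0\}$ has three elements, and transitivity alone does not force positive rank: an elliptic curve with irreducible $2$-division cubic can have rank $0$. I would therefore read the corollary under the standing assumption $g>1$ of that section, as the paper does just before Theorem \ref{maincor}, and state this explicitly.

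The main obstacle is the genus issue just described, not the algebra. With $g>1$ assumed, the conclusion $\rank_\K(J)\geq 1$ follows immediately from Theorem \ref{maincor}.
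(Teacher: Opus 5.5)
Your proposal is correct and follows the same route as the paper, which gives no separate proof and presents the corollary as a direct re-expression of Theorem \ref{maincor}: the étale algebra of $J[2]\setminus\{0\}$ is a field (i.e.\ $\chi$ is irreducible) exactly when Galois acts transitively, an identification the paper also uses inside the proof of Theorem \ref{maincor}. You were right to state the hypothesis $g>1$ explicitly, since the paper leaves it implicit and the statement fails for $g=1$ (for example, rank-$0$ elliptic curves with no rational $2$-torsion).
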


The point is that when $\K=\Q$, Mascot's algorithm \cite{Mascot} (GitHub link \cite{Github}) can compute such a polynomial $\chi$. This algorithm has been implemented in the PARI/GP language \cite{PARI2}.

\section{The positive rank trick in families of curves}

We now extend our result to families of algebraic curves. See \cite{HilbertIrreducibility}[chapter 3.] for a reference on thin sets.

\begin{prop}\label{family} \mbox{}\\
    Let $X$ be a nice curve of genus $g>1$ defined over $\Q(t)$ with a $\Q(t)$-rational class of degree $1$. Assume that there exists $b\in \Q$ such that the fiber $X_b$ defined over $\Q$ is smooth, and that Galois acts transitively on its nonzero 2-torsion $J_b[2]\setminus\{ 0 \}$.\\
    Then, there exists a thin set $Z \subset \Q$ such that for every $a\in \Q\setminus Z$, the fiber $X_a$ is smooth and has strictly positive Mordell-Weil rank.
\end{prop}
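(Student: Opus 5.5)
The plan is to reduce to Theorem \ref{maincor} fibrewise and control the Galois action on the 2-torsion in the family by Hilbert irreducibility.

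\emph{Step 1: the finite étale cover.} Let $J \to \operatorname{Spec}\Q(t)$ be the Jacobian of the generic fibre. Spread $X$ out over a dense open $U \subset \mathbb{A}^1_\Q$ on which $X_U \to U$ is smooth and proper, and the degree 1 class extends to a section of $\Pic^1_{X_U/U}$. Removing finitely many points, we may assume $b \in U(\Q)$, since $X_b$ is smooth. Then $J_U[2]\setminus\{0\} \to U$ is finite étale of degree $2^{2g}-1$, because $2$ is invertible. Let $\chi(t,y) \in \Q(t)[y]$ be a polynomial whose étale algebra is that of the generic nonzero 2-torsion, as in Corollary \ref{CorIrreducible}. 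After shrinking $U$, we may take $\chi$ with coefficients regular on $U$, and its specialisation $\chi(a,y)$ describes $J_a[2]\setminus\{0\}$ for every $a \in U(\Q)$.

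\emph{Step 2: transitivity at the generic point.} The fibre at $b$ has Galois group acting transitively on $J_b[2]\setminus\{0\}$. The decomposition group at $b$ of the monodromy/Galois cover is a subgroup of the generic Galois group $G=\Gal$ of the splitting field of $\chi$ over $\Q(t)$, compatibly with the identification of the fibre sets. Hence $G$ also acts transitively, so $\chi$ is irreducible over $\Q(t)$. Equivalently, $J_U[2]\setminus\{0\}$ is connected, since the single orbit at $b$ meets every component. This specialisation compatibility is the step I expect to need the most care. I would phrase it via the étale fundamental group: $\pi_1$ of $\operatorname{Spec}\Q \xrightarrow{b} U$ maps into $\pi_1(U)$, so transitivity of the image at $b$ forces transitivity of $\pi_1(U)$, and hence of $G$ (a quotient of $\pi_1(U)$), on the fibre.

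\emph{Step 3: Hilbert irreducibility.} Since $\chi$ is irreducible over $\Q(t)$ with $G$ transitive, Hilbert's irreducibility theorem (\cite{HilbertIrreducibility}, chapter 3) gives a thin set $Z_0 \subset \Q$ outside of which the specialisation $\chi(a,y)$ is irreducible over $\Q$. Even more, the decomposition group is all of $G$, so transitivity is preserved. Take $Z = Z_0 \cup (\Q\setminus U(\Q))$, which is thin because it adds only a finite set. For $a \notin Z$, the fibre $X_a$ is smooth of genus $g>1$, carries the specialised rational degree 1 class, and $\Gal(\overline{\Q}/\Q)$ acts transitively on $J_a[2]\setminus\{0\}$. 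Theorem \ref{maincor} (or Corollary \ref{CorIrreducible}) then gives $\rank_\Q(J_a)\geq 1$.

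One subtle point is to justify that the specialised class stays defined over $\Q$ and has degree 1. I would handle it by choosing a representing divisor over $\Q(t)$ and excluding the finitely many $a$ where its support degenerates or meets poles.
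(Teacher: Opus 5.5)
Your argument is correct and has the same skeleton as the paper's proof. First, obtain transitivity on the generic nonzero 2-torsion from the fibre at $b$. Then apply Hilbert irreducibility to $\chi$ to get a thin exceptional set, and conclude fibrewise with Theorem \ref{maincor} (equivalently Corollary \ref{CorIrreducible}).

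The difference lies in how transitivity is transferred from $b$ to the generic point. The paper chooses $\chi$ adapted to $b$. It lifts a function on $J_b$ that is injective on $J_b[2]$ to a function $\pi$ on $J$ and sets $\chi=\prod_{P\neq 0}(x-\pi(P))$. Then $\chi_b$ is separable and describes $J_b[2]\setminus\{0\}$, and irreducibility of $\chi$ over $\Q(t)$ follows from irreducibility of its specialisation $\chi_b$, a purely polynomial argument. You instead use connectedness of the finite \'etale cover $J_U[2]\setminus\{0\}\to U$ (equivalently, the image of $G_\Q$ under $b$ lies in $\pi_1(U)$). This lets you take any $\chi$ describing the generic \'etale algebra, for example whatever an algorithm outputs over $\Q(t)$. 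The cost is invoking spreading out and \'etale covers.

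Two bookkeeping points. Shrinking $U$ so that $\chi$ has good specialisations may remove $b$. So run the connectedness argument on the open set containing $b$ before that shrinking; irreducibility over $\Q(t)$ does not depend on $U$. Also, a $\Q(t)$-rational degree 1 class need not be represented by a $\Q(t)$-rational divisor, so your closing suggestion of specialising a representing divisor is not fully general. Specialise the class as in your Step 1, via a section of $\Pic^1_{X_U/U}$.
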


\begin{proof}
    Loosely speaking, we will show that the assumption on $X_b$ ensures that the Galois action is transitive over $\Jmoins$, where $J/\Q(t)$ is the Jacobian of $X$. We will then conclude by \ref{CorIrreducible} and Hilbert's irreducibility theorem.\\
    
    Formally, the curve $X$ is defined over $\Q(t)$ and, for every $a \in \Q$, we note $X_a$ the fiber at $a\in \Q$ defined over $\Q$. The curve $X$ has a $\Q(t)$-rational degree 1 class by assumption. Denote by $J$ the Jacobian of $X$, which is defined over $\Q(t)$ as well. Let $\Lc:=\Q(t)(J[2])$ be the field of definition of the 2-torsion over $\Q(t)$. It is a finite Galois extension and we write $G:=\Gal(\Lc/\Q(t))$ for its Galois group.
    
    Let $J_b$ be the Jacobian of $X_b$. Pick $\pi_b \in \Q(J_b)$ in the function field of $J_b$ which is defined and injective on $J_b[2]$, and lift it arbitrarily to $\pi \in \Q(t)(J)$. Then $\pi$ restricts to an injective and Galois-equivariant map $\pi: J[2] \ra \Lc$. From $\pi$, we construct the polynomial $\chi(x) := \prod_{P\in J[2]\setminus\{0\}} (x-\pi(P))$. Since $\pi$ is Galois-equivariant, $\chi$ is invariant under Galois, whence $\chi\in \Q(t)[x]$. The injectivity of $\pi$ ensures that $\chi$ is separable and of Galois group $G$ over $\Q(t)$. In fact, because $\pi$ is an isomorphism of Galois sets between $\Jmoins$ and $\Delta := \{\text{roots of } \chi \} $, it induces an isomorphism of permutation representations between $\Gal_{\Q(t)}(\chi) \ra \Sym(\Delta)$ and $G \ra \Sym(\Jmoins)$. In particular, this proves that $G=\Gal_{\Q(t)}(\chi)$ indeed. We are going to show that $G$ is transitive, using the assumption on $X_b$. So we now study the specialisations of $\chi$.\\

    Let $d(t) \in \Q[t]$ be a common denominator for $\chi$, and let $\delta(t) \in \Q(t)$ be the discriminant of $\chi$. Then the set
    \[ Z_1:= \{a\in \Q ~ | ~ d(a) = 0 \text{ or } \delta(a)  = 0\} \]
    is finite, and for $a \in \Q \setminus Z_1$, the specialisation $\chi_a$ of $\chi$ at $t=a$ is a well-defined separable polynomial in $\Q[x]$. We write $G_a:= \Gal_\Q(\chi_a)$ for its Galois group. Let also
    \[ Z_2:= \{a\in \Q ~ | ~ X \text{ has bad reduction at } t=a \}; \]
    then $Z_2$ is also finite since we assumed that $X$ has at least one good fiber.
    
    For any $a\in \Q$, we note $\ev_a $ the specialisation at $t=a$. The morphism $\ev^J_a:J[2] \ra J_a[2]$ is surjective if $a$ is of good reduction. Therefore, for every $a\in\Q \setminus (Z_1\cup Z_2)$, there exists a unique map $\pi_a : J_a[2] \ra \Qb$ verifying $\pi_a \circ \ev^J_a = \ev_a \circ \pi$. It is injective and Galois-equivariant. We set $\Delta_a:=\{\text{roots of } \chi_a \}$. Similarly to the global case, since $\chi_a(x) = \prod_{P\in J_a[2] \moins } (x - \pi_a(P))$, the morphism of Galois sets $\pi_a$ realises an isomorphism of Galois representations between $G_a \ra \Sym(\Delta_a)$ and $\Gal(J_a[2]/\Q) \ra \Sym(J_a[2] \moins)$ for every $a\in \Q \setminus (Z_1 \cup Z_2)$. In fact, for such $a$, the \'etale algebra of the nonzero $2$-torsion of the good fiber $X_a$ is isomorphic to $\Q[x]/\chi_a(x)$.\\
    
    By assumption, $b \not \in Z_1 \cup Z_2$, and as we just showed, the polynomial $\chi_b(x) \in \Q[x]$ then describes the Galois action on $J_b[2] \setminus \{0\}$. The transitivity of the Galois action at $t=b$ therefore ensures that $\chi_b$ is irreducible. The polynomial $\chi$ is also irreducible as one of its specialisation is. It implies that $G$ is transitive thanks to the isomorphism of Galois sets $\Delta \simeq J[2] \setminus \{0\}$. Moreover, by the Hilbert irreducibility theorem, there exists a thin set $Z_3 \subset \Q$ such that for every $a\in \Q\setminus Z_3$, the polynomial $\chi_a$ is irreducible. The set $Z:= Z_1\cup Z_2 \cup Z_3$ is still thin. Finally, for every $a\in \Q\setminus Z$, the curve $X_a$ is nice, admits a transitive Galois action over its nonzero 2-torsion, and a $\Q$-rational class of degree 1 obtained by specialising that of $X$. By Corollary \ref{CorIrreducible}, we conclude that $J_a$ has positive Mordell-Weil rank for every $a \in \Q\setminus Z$.
\end{proof}

We can therefore quite easily produce families of curves with infinitely many of them of positive rank. We give explicit examples in the next section.\\

Notice, as a consequence of \ref{Ceresa}, that every fiber $X_a$ for $a\in\Q\setminus Z$ defines a curve over $\Q$ with no torsion Ceresa cycle. This therefore produces a tool to construct explicit family of curves with non-vanishing Ceresa cycles. Another explicit example of a family of curves with infinite order Ceresa class is given in \cite{NonVanishingCeresa}[Corollary 1.2].

\newpage

\section{Examples}

%\subsection{Generic Picard curves}

%Let $C/\Q$ be a Picard curve $y^3=f(x)$ with $f$ a polynomial over $\Q$ of degree 4. The curve $C$ is of genus $3$ and therefore its canonical divisor defines a degree $4$ rational divisor. Picard curves defined over $\Q$ are famously trigonal over $\Q$. A linear system of degree 3 induces a rational degree $3$ divisor. In particular, $C$ possesses a rational divisor of degree $4 \wedge 3 =1$. Studying the 2 Galois representation $\rho$ of a generic Picard curve may allow us to show that it has positive rank using \ref{maincor}. In \cite{Picard}[Appendix], Sutherland identifies the maximal allowed image $G$ of $\rho$ as embedded in $\Aut(J[2])\simeq\Sp(6,\F_2)$. This permits us to compute the Galois orbits of $J[2]$ of such a group by direct computation using \textit{MAGMA}.\revoir{Since a generic Picard group admits $G$ as image of $\rho$,} we conclude that a Picard curve has rational TC.

\subsection{A transitive action on the 2-torsion}

Consider the projective smooth curve $C_1$ with affine model the plane quartic defined over $\Q$ by 
$$ -x^3 + (y^2 - 2)x^2 + (-y^3 - 1)x + (y^2 + y) =0.$$
It is smooth of genus $g=3$, and has an obvious rational point $P_0:= (0,0)$. Denote by $J_1:=\Jac(C_1)$ its Jacobian. The Galois set $J_1[2]\setminus \{0\}$ admits an \'etale algebra realised by
\begin{gather*}
\chi_1(x) =
\parbox[t]{0.85\displaywidth }{\raggedright{\small
$
x^{63} - 64x^{62} - 864x^{61} + 99088x^{60} + 3147824x^{59} + 38645104x^{58} + 220851800x^{57} + 436655752x^{56}+ 643508880x^{55} + 38251581568x^{54} + 490860398080x^{53} + 3307604786624x^{52} + 14969666840812x^{51} + 51267708232424x^{50} + 146362375589644x^{49 } + 377890207690192x^{48 } + 906115797249104x^{47 } + 1958095383272224x^{46 } + 3765550593270872x^{45 } + 6918434059596824x^{44 } + 13265441301020808x^{43 } + 24473394958608600x^{42 } + 35658387247444144x^{41 } + 39428900697762272x^{40 } + 62242353754664918x^{39 } + 167417473214683624x^{38 } + 311338727446040676x^{37 } + 268023809404624792x^{36 } - 5764482841567130x^{35 } - 112184000527819920x^{34 } + 166883546405206664x^{33 } + 460583434543199080x^{32 } + 538227233748134224x^{31 } + 480849233193005120x^{30 } + 100484087630669160x^{29} - 315259092750715144x^{28} + 21600350738000124x^{27 } + 517511161339352888x^{26 } + 233689365450397204x^{25} - 51875634407421504x^{24 } + 177019364511448292x^{23 } + 115783645865255736x^{22} - 160055044846120332x^{21} - 32043325719462600x^{20 } + 161026888829507896x^{19 } + 64048643293573304x^{18 } - 51017236515333304x^{17 } - 24873586109487464x^{16} + 18821385350026217x^{15 } + 12441207943213152x^{14 } - 4132173650718804x^{13 } - 3648509134108696x^{12 } + 919811261539878x^{11 } + 730386886880200x^{10 } - 175242998184516x^{9 } - 91963999426888x^{8 } + 27821164721329x^{7 } + 8871762965632x^{6 } - 2538051208864x^{5 } - 584354054912x^{4 } + 148601942272x^{3 } + 43158978560x^{2 } + 2803515392x + 27541504
$
}}
\end{gather*}

as calculated by Mascot's algorithm \cite{Mascot}. This polynomial can be checked to be irreducible over $\Q$. According to Corollary \ref{CorIrreducible}, we therefore have
$$ \rank_\Q(J_1) \geq 1.$$

\subsection{A non-transitive action on the 2-torsion}

Consider the projective smooth curve $C_2$ with affine model the plane quartic defined over $\Q$ by
$$ x^4 + x^3 + 7x -y^4 + 1=0.$$
It has a trivial rational point $P_0 := [1,1,0]\in \Proj^1$ and is of genus $g=3$. Denote $J_2:=\Jac(C_2)$ its Jacobian. The Galois orbits decomposition of $J_2[2]\setminus\{0\}$ obtained by factoring the polynomial computed by Mascot's algorithm \cite{Mascot} is
$$ J_2[2]\setminus\{0\}: ~ 3_1 ~ 12_1 ~ 48_1. $$
We therefore cannot ensure positive rank yet. We have computed the \'etale algebras of the odd and even theta characteristics denoted respectively by $\OTC$ and $\Delta_{Even}$. Assuming the correctness of the algorithm of the author, the Galois orbits decompositions are
$$ \OTC: ~ 4_1 ~ 24_1 \hspace{5mm} \text{and} \hspace{5mm} \Delta_{Even}: ~ 12_1 ~ 24_1.$$
There is therefore no rational non-trivial 2-torsion point and no rational theta characteristic. According to the Proposition \ref{main}, we therefore have
$$ \rank_\Q(J_2) \geq 1.$$

\subsection{A family of curves with infinitely many fibers of positive rank}

In \cite{NicolasFamily}[section 4.3], Mascot computed a polynomial $\chi \in \Q(t)[x]$ expressing the etale algebra of the 2-torsion (see the section Computational Approach of the present article) of the smooth plane quartic defined over $\Q(t)$ by the following equation:
$$ x^4 + (2 - t)y^4 + 2x^3 + x(x + y) + (t - 1)(y + x^2 + x) = 0. $$
Every curve in the family has the rational point $(0,0)$. The author ensures that the computed polynomial is irreducible over $\Q(t)$ and hence we are in the situation of the proof of \ref{family}. Therefore, infinitely many curves in the family have positive rank.

\subsection{A constructed family of curves with infinitely many fibers of positive rank}

We now construct a family of curves with infinitely many fibers of positive rank. Consider the family of projective curves $X$ defined over $\Q(t)$ by the affine model
$$ -x^3 + (y^2 - 2)x^2 + (-y^3 - 1)x + (y^2 + y)(t^2-t+1) =0.$$
The curve $X$ is smooth and possesses the rational point $P_0=(0,0)$ and is of genus $g=3$. The fiber $X_b$ for $b=0$ is isomorphic to $C_1$ and therefore admits a transitive Galois action over its non-trivial 2-torsion. According to Proposition \ref{family}, this implies that there exists a thin set $Z\subset \Q$ such that for every $a\in \Q \setminus Z$, the projective curve $X_a$ defined over $\Q$ by the affine model
$$ -x^3 + (y^2 - 2)x^2 + (-y^3 - 1)x + (y^2 + y)(a^2-a+1) =0$$
is smooth and its Jacobian $J_a$ satisfies 
$$\rank_\Q(J_a) \geq 1.$$

Building over $C_1$ or any other curve verifying the Corollary \ref{CorIrreducible}, it is not very complicated to provide such families of curves. For instance, we could also have exhibited the smooth families 
$$ (t^3-1)x^3 + (y^2 - 2(t+1))x^2 + (t^4-2t^3+1)(-y^3 - 1)x + (t^2+1)(y^2 + y) =0$$
with rational point $(0,0)$ and fiber at $t=0$ isomorphic to $C_1$. Or
$$-x^3 + (y^2 - 2)x^2 + (-y^3 - 1)x + ((y^2 + y) -(t^2+t)) =0.$$
with rational point $(0,t)$ and fiber at $t=0$ isomorphic to $C_1$.
\newpage
\bibliography{bib}

\end{document}

%% file: MetaDonnees.tex
\usepackage{authblk}
\usepackage[dvipsnames]{xcolor}
\usepackage[12pt]{extsizes}
\usepackage[utf8]{inputenc}
\usepackage[english]{babel}
\usepackage{fancybox}		
\usepackage{amsmath}
\usepackage{amsthm}
\usepackage{comment}
\usepackage{amsfonts}
\usepackage{enumerate}
\usepackage[shortlabels]{enumitem}
\usepackage{graphicx}
\usepackage[left=2cm, right =2cm, top = 2cm, bottom =2cm]{geometry}
\usepackage{fullpage}
\usepackage{hyperref}
\usepackage{xcolor}
\usepackage{minted}
\usepackage{stmaryrd} 

\usepackage{tikz}
\usepackage{tikz-qtree}
\usepackage{tikz-cd}
\usepackage{graphicx}
\usepackage{array}
\usepackage{mathabx}
\usepackage{amssymb}

\usepackage{mathtools}
\makeindex

\newcommand{\Lc}{\mathbb{L}}
\newcommand{\K}{\mathbb{K}}
\newcommand{\Q}{\mathbb{Q}}
\newcommand{\Proj}{\mathbb{P}}
\newcommand{\Qb}{\overline{\Q}}

\newcommand{\Z}{\mathbb{Z}}

\newcommand{\F}{\mathbb{F}}

\newcommand{\Sel}{\Selmer^{(2)}}

\newcommand{\OTC}{\Delta_{Odd}}

\newcommand{\kb}{\overline{k}}

\newcommand{\Sp}{Sp_{2g}(\F_2)}

\newcommand{\ra}{\rightarrow}
\newcommand{\hra}{\hookrightarrow}

\usepackage[OT2,T1]{fontenc}
\DeclareSymbolFont{cyrletters}{OT2}{wncyr}{m}{n}
\DeclareMathSymbol{\Sha}{\mathalpha}{cyrletters}{"58}

\newtheorem{de}{Definition}
\newtheorem{theo}{Theorem}
\newtheorem{prop}[theo]{Proposition}
 
\newtheorem{cor}[theo]{Corollary}

\newcommand{\Pic}{\operatorname{Pic}}
\newcommand{\Jac}{\operatorname{Jac}}
\newcommand{\Selmer}{\operatorname{Sel}}
\newcommand{\Hop}{\operatorname{H}}

\newcommand{\DivOp}{\operatorname{Div}}
\newcommand{\rank}{\operatorname{rank}}

\renewcommand{\contentsname}{Sommaire} 
\everymath{\displaystyle}
\usepackage{url}